\newtheorem{thm}{Theorem}[section]
\newtheorem{cor}[thm]{Corollary}
\theoremstyle{definition}
\theoremstyle{remark}
\numberwithin{equation}{section}
\begin{document}

\title{On the change of variables formula for multiple integrals}
\author{Shibo Liu
\and Yashan Zhang}\thanks{Email Addresses: {\tt liusb@xmu.edu.cn} (S.~Liu), {\tt colourful2009@163.com} (Y.~Zhang)}
\dedicatory{Department of Mathematics, Xiamen University, Xiamen 361005,
China\\
Department of Mathematics, University of Macau, Macau, China}

\begin{abstract}We develop an elementary proof of the change of variables formula in multiple integrals. Our proof is based on an induction argument. Assuming the formula for $(m-1)$-integrals, we define the integral over hypersurface in $\mathbb{R}^m$, establish the divergent theorem and then use the divergent theorem to prove the formula for $m$-integrals. In addition to its simplicity, an advantage of our approach is that it yields the Brouwer Fixed
Point Theorem as a corollary.
\end{abstract}
\maketitle

\section{Introduction}

The change of variables formula for multiple integrals is a fundamental
theorem in multivariable calculus. It can be stated as follows.

\begin{thm}
\label{t1}Let $D$ and $\Omega$ be bounded \emph{open} domains in
$\mathbb{R}^{m}$ with piece-wise $C^{1}$-boundaries, $\varphi\in C^{1}%
(\bar{\Omega},\mathbb{R}^{m})$ such that $\varphi:\Omega\rightarrow D$ is a
$C^{1}$-diffeomorphism. If $f\in C(\bar{D})$, then%
\begin{equation}
\int_{D}f(y)\mathrm{d}y=\int_{\Omega}f(\varphi(x))\left\vert J_{\varphi
}(x)\right\vert \mathrm{d}x\text{,} \label{e00}%
\end{equation}
where $J_{\varphi}(x)=\det\varphi^{\prime}(x)$ is the Jacobian determinant of
$\varphi$ at $x\in\Omega$.
\end{thm}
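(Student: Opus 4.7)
The plan is to argue by induction on the dimension $m$. The base case $m=1$ is the classical substitution rule, which follows from the fundamental theorem of calculus applied to an antiderivative of $f$; the sign of $\varphi'$ is constant because $\varphi$ is a diffeomorphism of an interval, and that sign accounts for the absolute value in \eqref{e00}. For the inductive step, assuming the theorem in dimension $m-1$, I would first promote that hypothesis into a well-defined theory of hypersurface integrals in $\mathbb{R}^{m}$: for a piecewise $C^{1}$ hypersurface $\Sigma\subset\mathbb{R}^{m}$ with local parametrization $\psi:U\subset\mathbb{R}^{m-1}\to\Sigma$, declare
\[
\int_{\Sigma}g\,\mathrm{d}S=\int_{U}g(\psi(u))\sqrt{\det\bigl(\psi'(u)^{T}\psi'(u)\bigr)}\,\mathrm{d}u,
\]
and use the $(m-1)$-dimensional formula to verify that this definition is independent of the parametrization and patches coherently across charts.

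Next I would establish the divergence theorem in $\mathbb{R}^{m}$: for any $C^{1}$ vector field $V$ on $\bar{D}$,
\[
\int_{D}\operatorname{div}V\,\mathrm{d}y=\int_{\partial D}V\cdot n\,\mathrm{d}S.
\]
The natural proof is a Fubini-type argument, slicing $D$ by lines parallel to a coordinate axis, applying the one-dimensional fundamental theorem of calculus to each component $V_{i}$, and reassembling the boundary contributions as the surface integral just defined.

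With these tools in place, the main argument for \eqref{e00} would proceed as follows. After uniformly approximating $f\in C(\bar{D})$ by $C^{1}$ functions (for instance by mollification), one may assume $f\in C^{1}(\bar{D})$ and write $f$ as a divergence: take $V=(V_{1},0,\ldots,0)$ with $V_{1}(y)=\int_{a}^{y_{1}}f(t,y_{2},\ldots,y_{m})\,\mathrm{d}t$ for $a<\inf_{\bar{D}}y_{1}$, so that $\operatorname{div}V=f$ on $\bar{D}$. The divergence theorem then gives $\int_{D}f\,\mathrm{d}y=\int_{\partial D}V\cdot n\,\mathrm{d}S$. On the $\Omega$ side I would use the Piola transform $W(x):=\operatorname{cof}(\varphi'(x))^{T}V(\varphi(x))$ together with the Piola identity $\operatorname{div}_{x}W=|J_{\varphi}(x)|\,(\operatorname{div}_{y}V)\circ\varphi$ (the absolute value is legitimate since $J_{\varphi}$ has constant sign on the connected domain $\Omega$) to rewrite the right-hand side of \eqref{e00} as a boundary integral $\int_{\partial\Omega}W\cdot\nu\,\mathrm{d}S$ via the divergence theorem on $\Omega$.

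The hard part will be matching the two boundary surface integrals, that is, proving $\int_{\partial D}V\cdot n\,\mathrm{d}S=\int_{\partial\Omega}W\cdot\nu\,\mathrm{d}S$. This is exactly where the inductive hypothesis is used essentially: the restriction $\varphi|_{\partial\Omega}:\partial\Omega\to\partial D$ is an $(m-1)$-dimensional change of variables relating two parametrizations of $\partial D$, and the pointwise cofactor identity $\operatorname{cof}(\varphi'(x))^{T}V(\varphi(x))\cdot\nu\,\mathrm{d}S_{\partial\Omega}=V(\varphi(x))\cdot n\,\mathrm{d}S_{\partial D}$ (with compatibly chosen outward normals) delivers the equality. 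Executing this chart by chart on a piecewise $C^{1}$ boundary, tracking orientations across the pieces, and verifying the Piola identity through a clean multilinear-algebra computation constitute the technical bulk of the argument; the other steps are, by comparison, bookkeeping built on the induction.
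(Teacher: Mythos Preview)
Your overall architecture---induction on $m$, promotion of the $(m-1)$-dimensional formula to a theory of surface integrals, the divergence theorem via Fubini, writing $f=\operatorname{div}V$, and the Piola/cofactor transform $W=\operatorname{cof}(\varphi')^{T}(V\circ\varphi)$---is exactly the paper's core engine (its Theorem~\ref{t3} uses the Hadamard identity, which is your Piola identity, and the Cauchy--Binet formula to get the cofactor relation between the normals). The paper also needs, as you will, to mollify $\varphi$ to class $C^{2}$ so that the Piola identity $\sum_i\partial_{x^i}A_i=0$ is available; you mollify $f$ but do not mention mollifying $\varphi$.

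The substantive gap is in your boundary-matching step. You assert that ``the restriction $\varphi|_{\partial\Omega}:\partial\Omega\to\partial D$ is an $(m-1)$-dimensional change of variables,'' and plan to execute the comparison chart by chart. But the hypotheses of Theorem~\ref{t1} do not give you this: $\varphi$ is only a diffeomorphism on the \emph{open} set $\Omega$, and merely $C^{1}$ on $\bar\Omega$. Nothing prevents $J_{\varphi}$ from vanishing on $\partial\Omega$, nothing forces $\varphi|_{\partial\Omega}$ to be injective, and nothing says $\varphi$ respects the given piecewise $C^{1}$ chart structures of $\partial\Omega$ and $\partial D$. Without these, you cannot legitimately compute $\int_{\partial D}V\cdot n\,\mathrm{d}S$ by pulling back through $\varphi$ to $\partial\Omega$. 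The paper confronts exactly this obstacle: it proves the divergence/Piola argument only under the extra hypothesis that $\partial\Omega$ is \emph{singly parametrized} and $\varphi$ carries $\partial\Omega$ to $\partial D$ diffeomorphically (Theorem~\ref{t3}); it then handles the general case by a separate reduction: split $f=f_{+}-f_{-}$, pack $\Omega$ with finitely many disjoint balls $B_i$ up to small measure, apply Theorem~\ref{t3}/Corollary~\ref{cc} on each $B_i$ (where $\bar B_i\subset\Omega$, so $\varphi$ is a genuine diffeomorphism on $\bar B_i$ and the sphere $\partial B_i$ is singly parametrized), and obtain both inequalities between $\int_D f$ and $\int_\Omega f(\varphi)|J_\varphi|$ by exhaustion. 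Your plan is missing this reduction (or an equivalent device, such as exhausting $\Omega$ from inside by smooth subdomains $\Omega_\varepsilon\Subset\Omega$), and without it the chart-by-chart matching on $\partial\Omega$ cannot be justified from the stated hypotheses.
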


The usual proofs of this theorem that one finds in advanced calculus textbooks
involves careful estimates of volumes of images of small cubes under the map
$\varphi$ and numerous annoying details. Therefore several alternative proofs
have appeared in recent years. For example, in \cite{MR1699248} P. Lax proved
the following version of the formula.

\begin{thm}
Let $\varphi:\mathbb{R}^{m}\rightarrow\mathbb{R}^{m}$ be a $C^{1}$-map such
that $\varphi(x)=x$ for $\left\vert x\right\vert \geq R$, and $f\in
C_{0}(\mathbb{R}^{m})$. Then%
\[
\int_{\mathbb{R}^{m}}f(y)\mathrm{d}y=\int_{\mathbb{R}^{m}}f(\varphi
(x))J_{\varphi}(x)\mathrm{d}x\text{.}%
\]

\end{thm}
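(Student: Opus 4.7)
The plan is to decompose $f$ into a piece whose support misses the region where $\varphi$ differs from the identity (so the formula is tautological there) and a piece that is a divergence (so both sides vanish by the fundamental theorem of calculus). By uniform approximation on a fixed compact set---note that $\varphi^{-1}(K)$ is bounded whenever $K$ is bounded, since $\varphi$ is the identity outside $B_{R}$---one first reduces to $f\in C^{1}(\mathbb{R}^{m})$ with compact support.

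The main technical input is the \emph{divergence representation lemma}: every $h\in C^{1}(\mathbb{R}^{m})$ with compact support and $\int h=0$ can be written as $h=\operatorname{div}u$ for some compactly supported $u\in C^{1}(\mathbb{R}^{m},\mathbb{R}^{m})$. I would prove this by induction on $m$. In dimension one, $u(x)=\int_{-\infty}^{x}h(t)\,dt$ works, and vanishes for large $|x|$ precisely because $\int h=0$. For $m>1$, fix a smooth bump $\phi\in C^{\infty}(\mathbb{R})$ with $\int\phi=1$ and bounded support, set $H(x_{2},\dots,x_{m})=\int h(t,x_{2},\dots,x_{m})\,dt\in C^{1}(\mathbb{R}^{m-1})$, and observe that $h-\phi(x_{1})H$ has zero $x_{1}$-integral on every fibre, so the one-dimensional construction yields $u_{1}$ with $\partial_{x_{1}}u_{1}=h-\phi(x_{1})H$ and compact support. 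Since $\int_{\mathbb{R}^{m-1}}H=\int h=0$, the inductive hypothesis produces $V=(V_{2},\dots,V_{m})$ with $\operatorname{div}V=H$, and $u=(u_{1},\phi(x_{1})V_{2},\dots,\phi(x_{1})V_{m})$ is the required field.

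The second ingredient is the \emph{Piola identity}: for $u\in C^{1}$ compactly supported, the pulled-back field $v(x):=[\operatorname{cof}\varphi'(x)]^{\mathrm{T}}u(\varphi(x))$ satisfies $\operatorname{div}_{x}v(x)=J_{\varphi}(x)\,(\operatorname{div}u)(\varphi(x))$. This reduces, via the chain rule, to the statement that each row of $\operatorname{cof}\varphi'$ is divergence-free, which is a direct cofactor-expansion computation; moreover $v$ is compactly supported because $v=u$ on $\{|x|\geq R\}$.

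To assemble, choose $M>R$ with $M>\sup_{|x|\leq R}|\varphi(x)|$ and pick $\eta\in C^{\infty}$ compactly supported in $\{|y|>M\}$ with $\int\eta=1$. Setting $c=\int f$ and applying the lemma to $f-c\eta$ gives $f=c\eta+\operatorname{div}u$ with $u$ compactly supported. The left side then equals $c$ immediately. For the right side, the Piola identity turns $\int(\operatorname{div}u)(\varphi)\,J_{\varphi}$ into $\int\operatorname{div}_{x}v\,dx=0$ by the ordinary divergence theorem, while the choice of $M$ forces $\eta(\varphi(x))J_{\varphi}(x)=\eta(x)$ pointwise (the integrand is nonzero only if $\varphi(x)\in\operatorname{supp}\eta$, which forces $|x|>R$, where $\varphi$ is the identity and $J_{\varphi}=1$), hence $c\int\eta(\varphi)J_{\varphi}=c$. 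The two sides agree. The main obstacle is the divergence representation lemma: the induction has to be arranged so that $u$ stays compactly supported, which is exactly the role of the cutoff $\phi(x_{1})$; the Piola identity and the final assembly are essentially routine.
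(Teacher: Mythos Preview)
The paper does not prove this theorem; it is quoted from Lax as motivation, and the paper's own work is the proof of Theorem~\ref{t1}. So there is no proof here to compare against directly. Your argument is nonetheless close in spirit to what the paper does for its Theorem~\ref{t3}: both pull the integrand back through $\varphi$ via the cofactor matrix and rest on the Piola/Hadamard identity that the rows of $\operatorname{cof}\varphi'$ are divergence-free (the paper's equation~\eqref{e7}). The paper works on a bounded domain, takes a single antiderivative $Q$ with $\partial_{y^1}Q=f$, and applies the divergence theorem on $\partial D$ and $\partial\Omega$; you replace the boundary by the global decomposition $f=c\eta+\operatorname{div}u$ with $u$ compactly supported, which is the natural substitute on all of $\mathbb{R}^m$. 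Your inductive construction of $u$ in the divergence representation lemma is correct as written.

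One gap should be named. The Piola identity, as you use it, requires differentiating $\operatorname{cof}\varphi'$ and hence $\varphi\in C^2$; with $\varphi$ merely $C^1$ the field $v=[\operatorname{cof}\varphi']^{\mathrm T}(u\circ\varphi)$ is only continuous, and $\operatorname{div}_x v$ need not exist pointwise. You smooth $f$ but never $\varphi$. The remedy is routine---mollify $\varphi-\mathrm{id}$, which is compactly supported, to obtain $C^2$ approximants that are still the identity outside a fixed ball, and pass to the limit in both integrals---and the paper makes exactly this reduction at the start of its proof of Theorem~\ref{t3}; but it should be stated.
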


The requirment that $\varphi$ is an identity map outside a big ball is
somewhat restricted. This restriction was also removed by Lax in
\cite{MR1818184}. Then, Tayor \cite{MR1893211} and Ivanov\cite{MR2179859}
presented different proofs of the about result of Lax \cite{MR1699248} using
differential forms. See also B{\'a}ez-Duarte \cite{MR1231489} for a proof of a
variant of Theorem \ref{t1} which does not require that $\varphi:\Omega\to D$
is a diffeomorphism. As pointed out by Taylor \cite[Page 380]{MR1893211}, because the proof relies on integration of differential
forms over manifolds and Stokes' theorem, it requires that one knows the change of variables
formula as formulated in our Theorem \ref{t1}.

In this paper, we will present a simple elementary proof of Theorem \ref{t1}.
Our approach does not involve the language of differential forms. The idea is
motivated by Excerise 15 of \S 1-7 in the famous textbook on classical
differential geometry \cite{MR0394451} by do Carmo. The excerise deals with
the two dimensional case $m=2$. We will perform an induction argument to
generize the result to the higher dimensional case $m\geq2$. In our argument,
we will apply the Cauchy-Binet formula about the determinant of the product of
two matrics. As a byproduct of our approach, we will also obtain the
Non-Retraction Lemma (see Corollary \ref{c1}), which implies the Brouwer Fixed
Point Theorem.

\section{Integral over hypersurface}

We will prove Theorem \ref{t1} by an induction argument. The case $m=1$ is
easily proved in single variable calculus. Suppose we have proven Theorem
\ref{t1} for $\left(  m-1\right)  $-dimension, where $m\geq2$. We will define
the integral over a hypersurface (of codimension one) in $\mathbb{R}^{m}$ and establish the divergent theorem in $\mathbb{R}^{m}$. Then, in the next two sections we will use the divergent theorem to
prove Theorem \ref{t1} for $m$-dimension.

Let $U$ be a Jordan measurable bounded closed domain in $\mathbb{R}^{m-1}$, $x:U\rightarrow\mathbb{R}^{m}$,%
\[
(  u^{1},\ldots,u^{m-1})  \mapsto(  x^{1},\ldots,x^{m})
\]
be a $C^{1}$-map such that the restriction of $x$ in the interior $U^\circ$ is injective, and
\begin{equation}
\operatorname*{rank}\left(  \frac{\partial x^{i}}{\partial u^{j}}\right)
=m-1\text{,} \label{e}%
\end{equation}
then we say that $x:U\rightarrow\mathbb{R}^{m}$ is a $C^{1}$-parametrized
surface. By definition, two $C^{1}$-parametrized surfaces $x:U\rightarrow
\mathbb{R}^{m}$ and $\tilde{x}:\tilde{U}\rightarrow\mathbb{R}^{m}$ are
equivalent if there is a $C^{1}$-diffeomorphism $\phi:\tilde{U}\rightarrow U$
such that $\tilde{x}=x\circ\phi$. The equivalent class $[x]$ is called a
\emph{hypersurface}, and $x:U\rightarrow\mathbb{R}^{m}$ is called a
parametrization of the hypersurface. Since it is easy to see that
$x(U)=\tilde{x}(\tilde{U})$ if $x$ and $\tilde{x}$ are equivalent, $[x]$ can
be identified as the subset $S=x(U)$.

Let $S$ be a hypersurface with parametrization $x:U\rightarrow\mathbb{R}^{m}$.
By \eqref{e}, for $u\in U$,%
\begin{equation}
N(u)=\left(  \frac{\partial(x^{2},\ldots,x^{m})}{\partial(u^{1},\ldots
,u^{m-1})},\ldots,\left(  -1\right)  ^{m+1}\frac{\partial(x^{1},\ldots
,x^{m-1})}{\partial(u^{1},\ldots,u^{m-1})}\right)  \neq0\text{,} \label{e0}%
\end{equation}
where%
\[
\frac{\partial(x^{1},\ldots,\hat{x}^{i},\ldots,x^{m})}{\partial(u^{1}%
,\ldots,u^{m-1})}=\det\left(
\begin{array}
[c]{ccc}%
\partial_{u^{1}}x^{1} & \cdots & \partial_{u^{m-1}}x^{1}\\
\vdots &  & \vdots\\
\partial_{u^{1}}x^{i-1} & \cdots & \partial_{u^{m-1}}x^{i-1}\\
\partial_{u^{1}}x^{i+1} & \cdots & \partial_{u^{m-1}}x^{i+1}\\
\vdots &  & \vdots\\
\partial_{u^{1}}x^{m} & \cdots & \partial_{u^{m-1}}x^{m}%
\end{array}
\right)  \text{.}%
\]
It is well known that $N(u)$ is a normal vector of $S$ at $x(u)$.

Now, we can define the \emph{surface integral} of a continuous function
$f:S\rightarrow\mathbb{R}$ by%
\begin{equation}
\int_{S}f\,\mathrm{d}\sigma=\int_{U}f(x(u))\left\vert N(u)\right\vert
\,\mathrm{d}u\text{.} \label{ei}%
\end{equation}
By the change of variables formular for $\left(  m-1\right)  $-integrals, it
is not difficult to see that if $\tilde{x}:\tilde{U}\rightarrow\mathbb{R}^{m}$
is another parametrization of $S$, then%
\[
\int_{U}f(x(u))\left\vert N(u)\right\vert \,\mathrm{d}u=\int_{\tilde{U}%
}f(\tilde{x}(v))\left\vert \tilde{N}(v)\right\vert \,\mathrm{d}v\text{,}%
\]
where $\tilde{N}$ is defined similar to \eqref{e0}. Therefore, our surface
integral is well defined.

If $\Sigma=\bigcup_{i=1}^{\ell}S_{i}$, where $S_{i}=x_{i}(U_{i})$
are hypersurfaces such that $x_{i}(U_{i}^\circ)\cap x_{j}(U_{j}^\circ)=\emptyset$ for $i\neq j$,
then we call $\Sigma$ a piece-wise $C^{1}$-hypersurface and define the
integral of $f\in C(\Sigma)$ by%
\[
\int_{\Sigma}f\,\mathrm{d}\sigma=\sum_{i=1}^{\ell}\int_{S_{i}}f\,\mathrm{d}%
\sigma\text{.}%
\]

\begin{thm}
[Divergent Theorem]\label{t2}Let $D$ be bounded open domain in $\mathbb{R}%
^{m}$ with piece-wise $C^{1}$-boundary $\partial D$, $F:\bar{D}\rightarrow
\mathbb{R}^{m}$ be a $C^{1}$-vector field, $n$ is the unit outer normal vector
field on $\partial D$, then%
\[
\int_{D}\operatorname*{div}F\,\mathrm{d}x=\int_{\partial D}F\cdot
n\,\mathrm{d}\sigma\text{.}%
\]
\end{thm}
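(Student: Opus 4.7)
The plan is to reduce the identity to the one-dimensional fundamental theorem of calculus, applied separately in each coordinate direction via Fubini's theorem, and then identify the resulting boundary terms with surface integrals using the definition \eqref{ei}. By the linearity of both sides in $F$ and the expansion $\operatorname{div}F=\sum_i \partial_i F^i$, it suffices to prove, for each fixed $i\in\{1,\ldots,m\}$, the componentwise identity
\[
\int_D \partial_i F^i\,\mathrm{d}x=\int_{\partial D} F^i n^i\,\mathrm{d}\sigma.
\]
By symmetry among the coordinates I may take $i=m$.

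First I would treat a \emph{standard} domain in the $x^m$-direction, namely $D=\{(x',x^m)\in\mathbb{R}^{m-1}\times\mathbb{R}:x'\in\Omega,\ a(x')<x^m<b(x')\}$, where $\Omega\subset\mathbb{R}^{m-1}$ is a bounded Jordan-measurable open set with piecewise $C^1$-boundary and $a,b\in C^1(\bar{\Omega})$ satisfy $a<b$. Fubini's theorem together with the one-dimensional fundamental theorem of calculus yields
\[
\int_D \partial_m F^m\,\mathrm{d}x=\int_\Omega\bigl[F^m(x',b(x'))-F^m(x',a(x'))\bigr]\,\mathrm{d}x'.
\]
The boundary $\partial D$ splits into the top graph $T$, the bottom graph $B$, and the lateral skirt $L$. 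On $T$, parametrized by $u\mapsto(u,b(u))$ over $\bar{\Omega}$, a direct computation from \eqref{e0} shows that $|N(u)|=\sqrt{1+|\nabla b(u)|^2}$ and that the outer unit normal is $n=(-\nabla b,1)/\sqrt{1+|\nabla b|^2}$; hence $\int_T F^m n^m\,\mathrm{d}\sigma=\int_\Omega F^m(u,b(u))\,\mathrm{d}u$. The analogous computation on $B$ contributes $-\int_\Omega F^m(u,a(u))\,\mathrm{d}u$. On any piece of $L$ parametrized by $(v,t)\mapsto(\gamma(v),t)$, the last column of the Jacobian is $(0,\ldots,0,1)^T$, so the $(m-1)\times(m-1)$ minor that defines $N^m$ vanishes; hence $n^m|_L=0$ and $L$ contributes nothing. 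Adding the three pieces matches the Fubini expression above.

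For a general bounded open $D$ with piecewise $C^1$-boundary, I would decompose $\bar{D}$ into a finite collection of standard subdomains $D_1,\ldots,D_\ell$ in the $x^m$-direction with pairwise disjoint interiors, apply the standard case to each, and sum. Any interface shared by two pieces $D_j$ and $D_k$ appears in $\partial D_j$ and $\partial D_k$ with opposite outer normals, so the two contributions cancel; what remains is exactly $\int_{\partial D} F^m n^m\,\mathrm{d}\sigma$. Repeating the argument in each of the other coordinate directions and summing over $i$ completes the proof.

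The main obstacles are twofold. First, an orientation-bookkeeping check: the sign $(-1)^{m+1}$ in \eqref{e0} means that $N(u)$ obtained from a given parametrization is outward for some parities of $m$ and inward for others, so one must carefully track signs to ensure that $F\cdot N\,\mathrm{d}u$ reproduces $F\cdot n\,\mathrm{d}\sigma$ with the \emph{outer}-normal sign on each boundary piece. Second, a short geometric lemma is required: a bounded open domain with piecewise $C^1$-boundary (in the sense of the previous paragraph) admits a finite decomposition into standard subdomains in every coordinate direction. Both points are routine, but each needs explicit verification against the definition of piecewise $C^1$-hypersurface given earlier in this section.
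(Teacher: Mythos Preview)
Your argument is correct and matches the paper's proof almost exactly: reduce by linearity to a single component $F^i$, treat a domain that is a region between two $C^1$ graphs in the $x^i$-direction via Fubini and the one-variable fundamental theorem, and identify the top, bottom, and lateral boundary contributions just as you do. The only divergence is in the last step: where you propose to decompose a general $D$ into finitely many such standard pieces and cancel across the internal interfaces, the paper takes the usual textbook shortcut and simply restricts to domains that are simultaneously $i$-type for every $i=1,\ldots,m$, thereby avoiding your decomposition lemma altogether. Your version is a bit more general at the cost of that extra geometric lemma; the paper's is shorter but narrower.
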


\begin{proof}
Having defined the surface integral, the proof of the theorem is a standard
application of the Fubini Theorem. We include the details here for completeness.

We say that $F=(F^{1},\ldots,F^{m})$ is of $i$-type if $F^{j}=0$ for $j\neq
i$. We also say that $D$ is of $i$-type, if there are a bounded closed domain
$U$ in $\mathbb{R}^{m-1}$ with piece-wise $C^{1}$-boundary and $\varphi_{\pm
}\in C^{1}(U)$ such that%
\[
D=\left\{  \left.  x\right\vert \,\varphi_{-}(x^{\prime})< x^{i}<
\varphi_{+}(x^{\prime})\text{, }x^{\prime}\in U^\circ\right\}  \text{,}%
\]
where $x^{\prime}=(x^{1},\ldots,x^{i-1},x^{i+1},\ldots,x^{m})$.

Let $F=(0,\ldots,0,F^{m})$ be an $m$-type vector field. Suppose $D$ is of
$m$-type with $U$ and $\varphi_{\pm}$ as above. Then $\partial D$ consists of
three parts:%
\[
\Sigma_{\pm}=\left\{  \left.  x=(x^{\prime},\varphi_{\pm}(x^{\prime
}))\right\vert \,x^{\prime}\in U\right\}
\]
and%
\[
\Sigma_{0}=\left\{  \left.  x=(x^{\prime},x^{m})\right\vert \,\varphi
_{-}(x^{\prime})\leq x^{m}\leq\varphi_{+}(x^{\prime})\text{, }x^{\prime}%
\in\partial U\right\}  \text{.}%
\]
On $\Sigma_{\pm}$, by \eqref{e0} we have%
\[
N=\left(
-1\right)  ^{m+1}\left(  -\partial_{1}\varphi_{\pm},\ldots,-\partial_{m-1}\varphi_{\pm},1\right)  \text{.}%
\]
Hence $\left\vert N\right\vert =\sqrt{1+\left\vert \nabla\varphi_{\pm
}\right\vert ^{2}}$ and%
\[
n=\pm\frac{1}{\sqrt{1+\left\vert \nabla\varphi_{\pm}\right\vert ^{2}}}\left(
-\partial_{1}\varphi_{\pm},\ldots,-\partial_{m-1}\varphi_{\pm},1\right)  \text{.}%
\]
While on $\Sigma_{0}$, $n=(--,0)$ and $F\cdot n=0$. Consequently, by
\eqref{ei} we obtain%
\begin{align*}
\int_{\partial D}F\cdot n\,\mathrm{d}\sigma &  =\int_{\Sigma_{+}}F\cdot
n\,\mathrm{d}\sigma+\int_{\Sigma_{-}}F\cdot n\,\mathrm{d}\sigma+\int
_{\Sigma_{0}}F\cdot n\,\mathrm{d}\sigma\\
&  =\int_{U}F^{m}(x^{\prime},\varphi_{+}(x^{\prime}))\mathrm{d}x^{\prime}%
-\int_{U}F^{m}(x^{\prime},\varphi_{-}(x^{\prime}))\mathrm{d}x^{\prime}\\
&  =\int_{U}\mathrm{d}x^{\prime}\int_{\varphi_{-}(x^{\prime})}^{\varphi
_{+}(x^{\prime})}\partial_{m}F^{m}(x^{\prime},t)\mathrm{d}t =\int_{D}%
\partial_{m}F^{m}(x)\mathrm{d}x=\int_{D}\operatorname*{div}F\,\mathrm{d}%
x\text{.}%
\end{align*}
In a similar maner we can show that the theorem is valid for $i$-type vector
field on $i$-type domain.

As in most calculus textbooks, we only prove the theorem for the case that $D$
is simultaneously $i$-type for all $i=1,\ldots,m$. For a general $C^{1}%
$-vector field $F=\left(  F^{1},\ldots,F^{m}\right)  $ on $\bar{D}$, we set
$F_{i}=(0,\ldots,F^{i},\ldots,0)$. Since $F=F_{1}+\cdots+F_{m}$, and $F_{i}$
is $i$-type vector field on $i$-type domain $D$, we deduce
\begin{align*}
  \int_{\partial D}F\cdot n\,\mathrm{d}\sigma&=\sum_{i=1}^m\int_{\partial D} F_i\cdot n\,\mathrm{d}\sigma=\sum_{i=1}^m
  \int_D\operatorname*{div}F_i\,\mathrm{d}x
  =\int_D\operatorname*{div}F\,\mathrm{d}x\text{.}\qedhere
\end{align*}

\end{proof}

\section{Domains with singly parametrized boundary}

In this section, we prove the $m$-dimensional change of variables formula
\eqref{e00} for the case that $\partial\Omega$ can be \emph{singly
parametrized}, that is, there exists a $C^1$-parametrized surface $x:U\rightarrow\mathbb{R}^{m}$ such
that $\partial\Omega=x(U)$. For example, if $\Omega$ is a ball,
then $\partial\Omega$ can be singly parametrized by the well known parametrization.

In this case, we only need to require that the transformation $\varphi$ maps
$\partial\Omega$ to $\partial D$ diffeomorphicly. We have the following theorem.

\begin{thm}
\label{t3}Let $D$ and $\Omega$ be bounded open domains in $\mathbb{R}^{m}$
with $C^{1}$-boundaries, $\partial\Omega$ can be singly parametrized. Suppose
$\varphi:\bar{\Omega}\rightarrow\bar{D}$ is a $C^{1}$-map so that $\varphi$
maps $\partial\Omega$ to $\partial D$ diffeomorphicly, and $f\in C(\bar{D})$,
then%
\begin{equation}
\int_{D}f(y)\mathrm{d}y=\pm\int_{\Omega}f(\varphi(x))J_{\varphi}%
(x)\,\mathrm{d}x\text{.}\label{e3}%
\end{equation}
Here, the choice of the signs $\pm$ on the right hand side depends on whether
$\varphi$ preserve the orientation of the boundaries.
\end{thm}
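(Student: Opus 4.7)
The plan is to use the divergence theorem (Theorem~\ref{t2}) to convert each side of \eqref{e3} into a surface integral and then transport one boundary integral to the other through the common parametrization $\varphi\circ x$. First I would produce a $C^{1}$-vector field $F$ on $\bar D$ with $\operatorname{div}F=f$; the simplest recipe is $F=(F^{1},0,\dots,0)$ with $F^{1}(y)=\int_{-M}^{y^{1}}f(s,y^{2},\dots,y^{m})\,\mathrm{d}s$, which works when $f$ is $C^{1}$. To handle merely continuous $f\in C(\bar D)$ I would first approximate it uniformly on a ball containing $\bar D$ by smooth (e.g.\ polynomial) functions and, after proving the identity in the smooth case, pass to the limit, noting that both sides of \eqref{e3} are continuous in $f$ with respect to the sup norm because $D$ and $\Omega$ are bounded.

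Given such an $F$, Theorem~\ref{t2} applied on $D$ gives $\int_{D}f\,\mathrm{d}y=\int_{\partial D}F\cdot n\,\mathrm{d}\sigma$. Using the single parametrization $x\colon U\to\mathbb{R}^{m}$ of $\partial\Omega$, the composition $y=\varphi(x(u))$ parametrizes $\partial D$ (since $\varphi|_{\partial\Omega}$ is a diffeomorphism onto $\partial D$). The normal produced by formula \eqref{e0} for this parametrization, call it $N_{D}(u)$, has $i$-th component a signed $(m-1)\times(m-1)$ minor of the product matrix $\varphi'(x(u))\cdot x'(u)$. Expanding each such minor by the Cauchy--Binet formula yields the clean vector identity
\[
N_{D}(u)=\operatorname{cof}\bigl(\varphi'(x(u))\bigr)\,N_{\Omega}(u),
\]
where $\operatorname{cof}(A)$ denotes the cofactor matrix (so that $A^{T}\operatorname{cof}(A)=(\det A)\,I$). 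Equivalently this says $(Aw_{1})\times\cdots\times(Aw_{m-1})=\operatorname{cof}(A)(w_{1}\times\cdots\times w_{m-1})$ for the $(m-1)$-fold cross product in $\mathbb{R}^{m}$.

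I would then define the pulled-back vector field $G\colon\bar\Omega\to\mathbb{R}^{m}$ by $G(x)=\operatorname{cof}(\varphi'(x))^{T}\,F(\varphi(x))$, so that pointwise on $\partial\Omega$
\[
F(\varphi(x(u)))\cdot N_{D}(u)=G(x(u))\cdot N_{\Omega}(u).
\]
Integrating and using the definition \eqref{ei} of the surface integral, I obtain $\int_{\partial D}F\cdot n\,\mathrm{d}\sigma=\pm\int_{\partial\Omega}G\cdot n_{\Omega}\,\mathrm{d}\sigma$, where the sign accounts for whether $N_{D}$ and the outer normal of $\partial D$ point the same way (this is the orientation condition in the statement). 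Applying Theorem~\ref{t2} again, now on $\Omega$, the right side equals $\pm\int_{\Omega}\operatorname{div}G\,\mathrm{d}x$.

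The last and main step is to verify that $\operatorname{div}G(x)=J_{\varphi}(x)\,f(\varphi(x))$, which completes the proof. Expanding the divergence with the chain rule gives two groups of terms: one involves $\sum_{j}\partial_{j}\varphi^{k}\cdot\operatorname{cof}(\varphi')_{ij}$, and by the cofactor identity $\varphi'\cdot\operatorname{cof}(\varphi')^{T}=J_{\varphi}I$ this collapses to $J_{\varphi}\delta_{ik}$, producing exactly $J_{\varphi}(\operatorname{div}F)\circ\varphi=J_{\varphi}\,f\circ\varphi$. The second group is $\sum_{i,j}F^{i}(\varphi)\,\partial_{j}\operatorname{cof}(\varphi')_{ij}$, and this is the hard part: one must show the \emph{Piola identity} $\sum_{j}\partial_{j}\operatorname{cof}(\varphi')_{ij}=0$ for every $i$. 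I would prove this by writing each entry of $\operatorname{cof}(\varphi')$ as a signed $(m-1)\times(m-1)$ minor of $\varphi'$, differentiating, and observing that the resulting alternating sum has two identical columns (equivalently, it equals the expansion along a repeated row of a determinant and hence vanishes). This pointwise algebraic identity, once established, forces the second group to disappear, and Theorem~\ref{t3} follows.
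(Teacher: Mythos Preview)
Your proof is essentially identical to the paper's: the paper also reduces to smooth $f$ by approximation, takes an antiderivative in the first coordinate (your $F^{1}$ is the paper's $Q$), uses Cauchy--Binet to relate the two boundary normals, invokes the Piola/Hadamard identity $\sum_{j}\partial_{j}\operatorname{cof}(\varphi')_{ij}=0$, and applies the divergence theorem on both $D$ and $\Omega$; since $F$ has only one nonzero component, your $G=\operatorname{cof}(\varphi')^{T}F\circ\varphi$ is exactly the paper's $\tilde{Q}A$. The one technical point you omitted is that the Piola identity requires second derivatives of $\varphi$, so the paper also approximates $\varphi$ by $C^{2}$ maps before carrying out the divergence computation.
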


\begin{proof}
Since $f\in C(\bar{D})$, it can be continuously extended to $\mathbb{R}^{m}$.
Doing convolution with the mollifiers $\left\{  \eta_{\varepsilon}\right\}
_{\varepsilon>0}$, which are functions $\eta_\varepsilon\in C^\infty(\mathbb{R}^m)$ such that
\[
\int_{\mathbb{R}^{m}}\eta_{\varepsilon}(y)\mathrm{d}y=1\text{,\qquad
}\operatorname*{supp}\eta_{\varepsilon}\subset B_{\varepsilon}(0)\text{,}%
\]
we obtain a family of functions $f_{\varepsilon}\in C^{\infty}(\mathbb{R}%
^{m})$ such that as $\varepsilon\rightarrow0^{+}$,%
\[
\sup_{y\in D}\left\vert f_{\varepsilon}(y)-f(y)\right\vert \rightarrow
0\text{,\qquad}\sup_{x\in\Omega}\left\vert f_{\varepsilon}(\varphi
(x))J_{\varphi}(x)-f(\varphi(x))J_{\varphi}(x)\right\vert \rightarrow0\text{.}%
\]
It is then easy to see that%
\[
\int_{D}f_{\varepsilon}(y)\mathrm{d}y\rightarrow\int_{D}f(y)\mathrm{d}%
y\text{,\qquad}\int_{\Omega}f_{\varepsilon}(\varphi(x))J_{\varphi
}(x)\mathrm{d}x\rightarrow\int_{\Omega}f(\varphi(x))J_{\varphi}(x)\mathrm{d}%
x\text{.}%
\]
Therefore, we only need to prove \eqref{e3} for $f\in C^{\infty}%
(\mathbb{R}^{m})$. Using a similar approximating argument we may also assume that
$\varphi\in C^{2}(\bar{\Omega},\mathbb{R}^{m})$.

Let $C=(-a,a)\times\cdots\times(-a,a)$ be a cube containing $\bar{D}$, then define $Q:\bar{D}\rightarrow\mathbb{R}$,
\[
Q(y)=\int_{-a}^{y^1}f(t,y^2,\ldots,y^m)\mathrm{d}t\text{.}
\]
Then $Q\in
C^{1}(\bar{D})$ and
\[
\frac{\partial Q}{\partial y^{1}}=f\text{\qquad in }D\text{.}%
\]

Let $x:U\rightarrow\mathbb{R}^{m}$ be a parametrization of $\partial\Omega$.
Since $\varphi$ maps $\partial\Omega$ to $\partial D$ diffeomorphicly, it
follows that $y=\varphi\circ x$ is a parametrization of $\partial D$. Then%
\[
N=\left(  \frac{\partial(y^{2},\ldots,y^{m})}{\partial(u^{1},\ldots,u^{m-1}%
)},\ldots,\left(  -1\right)  ^{m+1}\frac{\partial(y^{1},\ldots,y^{m-1}%
)}{\partial(u^{1},\ldots,u^{m-1})}\right)
\]
is a normal vector at $y(u)$ on $\partial D$ and%
\begin{equation}
n=\pm N/\left\vert N\right\vert =(  n^{1},\ldots,n^{m})  \label{e4}%
\end{equation}
is the unit outer normal vector at $y(u)$ on $\partial D$. By the chain role
we have%
\[
\left(
\begin{matrix}
\partial_{u^{1}}y^{2} & \cdots & \partial_{u^{m-1}}y^{2}\\
\vdots &  & \vdots\\
\partial_{u^{1}}y^{m} & \cdots & \partial_{u^{m-1}}y^{m}%
\end{matrix}
\right)  =\left(
\begin{matrix}
\partial_{x^{1}}y^{2} & \cdots & \partial_{x^{m}}y^{2}\\
\vdots &  & \vdots\\
\partial_{x^{1}}y^{m} & \cdots & \partial_{x^{m}}y^{m}%
\end{matrix}
\right)  \left(
\begin{matrix}
\partial_{u^{1}}x^{1} & \cdots & \partial_{u^{m-1}}x^{1}\\
\vdots &  & \vdots\\
\partial_{u^{1}}x^{m} & \cdots & \partial_{u^{m-1}}x^{m}%
\end{matrix}
\right)  \text{.}%
\]
Applying the Cauchy-Binet formular, we obtain from \eqref{e4} that%
\begin{align}
\pm n^{1}\left\vert N\right\vert  &  =\frac{\partial(y^{2},\ldots,y^{m}%
)}{\partial(u^{1},\ldots,u^{m-1})}\nonumber\\
&  =\sum_{i=1}^{m}\frac{\partial(y^{2},\ldots,y^{m})}{\partial(x^{1}%
,\ldots,\hat{x}^{i},\ldots,x^{m})}\frac{\partial(x^{1},\ldots,\hat{x}%
^{i},\ldots,x^{m})}{\partial(u^{1},\ldots,u^{m-1})}=A\cdot\tilde{N}%
\text{,}\label{e6}%
\end{align}
where $A=\left(  A_{1},\ldots A_{m}\right)  $, $\tilde{N}=\left(  \tilde
{N}^{1},\ldots,\tilde{N}^{m}\right)  $, with
\[
A_{i}=\left(  -1\right)  ^{i+1}\frac{\partial(y^{2},\ldots
,y^{m})}{\partial(x^{1},\ldots,\hat{x}^{i},\ldots,x^{m})}\text{,\qquad}%
\tilde{N}^{i}=\left(  -1\right)  ^{i+1}\frac{\partial(x^{1},\ldots,\hat{x}%
^{i},\ldots,x^{m})}{\partial(u^{1},\ldots,u^{m-1})}\text{.}%
\]
Note that $\tilde{n}=\pm\tilde{N}/\left\vert \tilde{N}\right\vert $ is the
unit outer normal vector at $x(u)$ on $\partial\Omega$. Moreover, $A_{i}$ is
exactly the algebraic cofactor of $\partial_{x^{i}}y^{1}$ in the Jacobian%
\[
J_{\varphi}(x)=\det\left(
\begin{array}
[c]{ccc}%
\partial_{x^{1}}y^{1} & \cdots & \partial_{x^{m}}y^{1}\\
\vdots &  & \vdots\\
\partial_{x^{1}}y^{m} & \cdots & \partial_{x^{m}}y^{m}%
\end{array}
\right)  \text{.}%
\]
Thus, since $\varphi$ is of class $C^{2}$, by the Hadamard identity \cite[Page
14]{MR787404} we deduce%
\begin{equation}
\operatorname*{div}A=\sum_{i=1}^{m}\frac{\partial A_{i}}{\partial x^{i}%
}=0\text{.}\label{e7}%
\end{equation}
Let $\tilde{Q}=Q\circ\varphi$, then $\tilde{Q}\in C^{1}(\bar{\Omega})$. Using
\eqref{e7} we obtain%
\begin{align*}
\operatorname*{div}(\tilde{Q}A) &  =\nabla\tilde{Q}\cdot A+\tilde
{Q}\operatorname*{div}A=\nabla\tilde{Q}\cdot A\\
&  =\sum_{i=1}^{m}\frac{\partial\tilde{Q}}{\partial x^{i}}A_{i}=\sum_{i=1}%
^{m}\left(  \sum_{j=1}^{m}\left.  \frac{\partial Q}{\partial y^{j}}\right\vert
_{\varphi(x)}\frac{\partial y^{j}}{\partial x^{i}}\right)  A_{i}\\
&  =\sum_{j=1}^{m}\left.  \frac{\partial Q}{\partial y^{j}}\right\vert
_{\varphi(x)}\left(  \sum_{i=1}^{m}\frac{\partial y^{j}}{\partial x^{i}}%
A_{i}\right)  =\sum_{j=1}^{m}\left.  \frac{\partial Q}{\partial y^{j}%
}\right\vert _{\varphi(x)}\delta_{1}^{j}J_{\varphi}(x)\\
&  =\left.  \frac{\partial Q}{\partial y^{1}}\right\vert _{\varphi
(x)}J_{\varphi}(x)=f(\varphi(x))J_{\varphi}(x)\text{.}%
\end{align*}
Applying Theorem \ref{t2} and using \eqref{e6}, we have%
\begin{align*}
\int_{D}f(y)\mathrm{d}y &  =\int_{D}\frac{\partial Q}{\partial y^{1}%
}\mathrm{d}y=\int_{\partial D}Qn^{1}\,\mathrm{d}\sigma\\
&  =\int_{U}Q(y(u))n^{1}(u)\left\vert N(u)\right\vert \mathrm{d}u\\
&  =\pm\int_{U}\tilde{Q}(x(u))\left(  A(x(u))\cdot\tilde{N}(u)\right)
\,\mathrm{d}u\\
&  =\pm\int_{U}\left(  \tilde{Q}(x(u))A(x(u))\cdot\tilde{n}(u)\right)
\left\vert \tilde{N}(u)\right\vert \,\mathrm{d}u\\
&  =\pm\int_{\partial\Omega}\tilde{Q}A\cdot\tilde{n}\,\mathrm{d}\sigma=\pm
\int_{\Omega}\operatorname*{div}(\tilde{Q}A)\,\mathrm{d}x=\pm\int_{\Omega
}f(\varphi(x))J_{\varphi}(x)\,\mathrm{d}x\text{.}\qedhere
\end{align*}

\end{proof}

\begin{cor}
\label{cc}Under the assumption of Theorem \ref{t3}, if $J_{\varphi}(x)$ does
not change sign as $x$ varies in $\Omega$, then%
\[
\int_{D}f(y)\mathrm{d}y=\int_{\Omega}f(\varphi(x))\left\vert J_{\varphi
}(x)\right\vert \,\mathrm{d}x\text{.}%
\]

\end{cor}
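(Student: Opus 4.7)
The plan is to leverage Theorem \ref{t3} directly: it already gives
\[
\int_D f(y)\,\mathrm{d}y = \pm \int_\Omega f(\varphi(x))J_\varphi(x)\,\mathrm{d}x,
\]
where the ambiguous sign is determined once and for all by how $\varphi$ acts on the boundary orientation, and in particular does not depend on the test function $f$. So the only real task is to pin down this global sign using the hypothesis that $J_\varphi$ has a consistent sign on $\Omega$, and then observe that the identity collapses to the absolute-value form.

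To identify the sign I would test against the constant function $f \equiv 1$, whereupon Theorem \ref{t3} specializes to
\[
|D| = \pm \int_\Omega J_\varphi(x)\,\mathrm{d}x,
\]
with $|D| > 0$ since $D$ is a nonempty bounded open set. Now I split into the two cases allowed by the hypothesis. If $J_\varphi \geq 0$ on $\Omega$, then $\int_\Omega J_\varphi \geq 0$; this integral is in fact strictly positive, because a nonnegative continuous function with zero integral would have to vanish identically, forcing $|D|=0$, a contradiction. Hence the sign on the right must be $+$, and pointwise $J_\varphi = |J_\varphi|$. The case $J_\varphi \leq 0$ is symmetric: the same reasoning excludes $\int_\Omega J_\varphi = 0$ and forces the sign to be $-$, while pointwise $-J_\varphi = |J_\varphi|$.

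Substituting the now-determined sign back into the identity of Theorem \ref{t3} for a general $f \in C(\bar D)$ immediately produces the claimed formula in both cases. There is no genuine obstacle here; the only subtle point is excluding $\int_\Omega J_\varphi = 0$, and this follows at once from $|D|>0$ together with the continuity and fixed sign of $J_\varphi$.
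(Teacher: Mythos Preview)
Your argument is correct. The paper actually states this corollary without proof, treating it as an immediate consequence of Theorem~\ref{t3}; your approach of testing with $f\equiv 1$ to pin down the global sign is the natural way to make the inference explicit, and it works exactly as you say since Theorem~\ref{t3} makes clear that the sign depends only on how $\varphi$ acts on the boundary orientation, not on $f$.
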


\begin{cor}
[Non-Retraction Lemma]\label{c1}Let $B$ be the unit closed ball in
$\mathbb{R}^{m}$, then there does not exist $C^{1}$-map $T:B\rightarrow
\mathbb{R}^{m}$ such that $T(B)\subset\partial B$ and $T|_{\partial
B}=1_{\partial B}$.
\end{cor}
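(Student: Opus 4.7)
My plan is to argue by contradiction and apply Theorem \ref{t3} directly to the putative retraction $T$, with no need for any homotopy or auxiliary construction.

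Suppose such a $C^{1}$-retraction $T$ exists. Since $T(B) \subset \partial B \subset B$, the map $T$ can be regarded as a $C^{1}$-map from the closed unit ball to itself. Take $\Omega = D = B^{\circ}$ (the open unit ball), so that $\bar{\Omega} = \bar{D} = B$. The boundary sphere $\partial B$ is singly parametrized (for instance, by the usual spherical coordinates over a closed box in $\mathbb{R}^{m-1}$), and the restriction $T|_{\partial B} = 1_{\partial B}$ is manifestly a $C^{1}$-diffeomorphism of $\partial B$ onto $\partial B$. Therefore every hypothesis of Theorem \ref{t3} is in place, and applying it with $\varphi = T$ and $f\equiv 1$ gives
\[
\mathrm{Vol}(B) \;=\; \pm\int_{B^{\circ}} J_{T}(x)\,\mathrm{d}x.
\]

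The crux is to show that $J_{T}\equiv 0$ on $B$. Because $T$ takes values in the unit sphere, $|T(x)|^{2}=1$ identically. Differentiating this identity with respect to $x^{i}$ gives $\sum_{j} T^{j}(x)\,\partial_{x^{i}}T^{j}(x)=0$ for each $i$, i.e.\ $T'(x)^{\top}T(x)=0$ as a vector equation. Since $|T(x)|=1\neq 0$, the nonzero vector $T(x)$ lies in the kernel of $T'(x)^{\top}$, so $T'(x)^{\top}$, and hence $T'(x)$, is singular; thus $J_{T}(x)=\det T'(x)=0$ for every $x$. Substituting this back into the displayed identity yields $\mathrm{Vol}(B)=0$, the desired contradiction.

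I do not anticipate a serious obstacle. Verifying the hypotheses of Theorem \ref{t3} is essentially automatic: the inclusion $T(B)\subset B$ is immediate from $\partial B\subset B$, and the boundary-diffeomorphism condition is trivially satisfied because $T|_{\partial B}$ is the identity. The sign ambiguity $\pm$ on the right-hand side plays no role whatsoever, as that side vanishes identically. The only step requiring a line of calculation is the observation that a $C^{1}$-map into a hypersurface must have vanishing Jacobian, which follows from a single differentiation of $|T|^{2}=1$.
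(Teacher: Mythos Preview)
Your proof is correct. Both you and the paper argue by contradiction and invoke Theorem~\ref{t3} with $\Omega=D=B^{\circ}$ and $\varphi=T$; the only difference lies in how each argument forces the right-hand side of \eqref{e3} to vanish. You take $f\equiv 1$ and show directly that $J_{T}\equiv 0$ by differentiating the constraint $|T|^{2}=1$, whereas the paper instead selects a test function supported away from the sphere, namely $f(y)=\max\{1-4|y|^{2},0\}$, so that $f\circ T\equiv 0$ with no information about $J_{T}$ required. Your route has the merit of explaining geometrically why any $C^{1}$-map into a hypersurface must have singular derivative; the paper's route avoids that differentiation altogether and would apply verbatim under the weaker hypothesis that $T(B)$ merely misses a fixed interior ball.
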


\begin{proof}
The proof below is essentially a variant form of the argument in
\cite[Corollarys]{MR1231489}. Suppose there is a $C^{1}$-map $T$ with the
stated properties. Obviously $T$ map $\partial B$ to itself diffeomorphicly.
We define a continuous function $f:B\rightarrow\mathbb{R}$,%
\[
f(y)=\left\{
\begin{array}
[c]{ll}%
1-4\left\vert y\right\vert ^{2}\text{,} & \text{if }\left\vert y\right\vert
\leq\frac{1}{2}\text{,}\\
0\text{,} & \text{if }\frac{1}{2}<\left\vert y\right\vert \leq1\text{.}%
\end{array}
\right.
\]
Then $f(T(x))=0$ for all $x\in B$. By Theorem \ref{t3},%
\[
0<\int_{B}f(y)\,\mathrm{d}y=\pm\int_{B}f(T(x))J_{T}(x)\,\mathrm{d}x=0\text{,}%
\]
a contradiction.
\end{proof}

As is well known, the Brouwer Fixed Point Theorem is an easy consequence of
Corollary \ref{c1}.

\section{General domains}

In this section, we prove Theorem \ref{t1} for the general case that
$\partial\Omega$ may not be singly parametrized. Let $f_{\pm}=\max\left\{  \pm
f,0\right\}  $, then $f=f_{+}-f_{-}$. Because $f_{\pm}$ are also continuous on
$\bar{D}$, it follows that we only need to prove the result for nonnegative
$f$. For simplicity, we set%
\[
\tilde{f}(x)=f(\varphi(x))\left\vert J_{\varphi}(x)\right\vert \text{.}%
\]
We want to prove%
\[
\int_{D}f(y)\,\mathrm{d}y=\int_{\Omega}\tilde{f}(x)\,\mathrm{d}x\text{.}%
\]
For any $\varepsilon>0$, there exist disjoint balls $B_{i}\subset\Omega$
($i=1,\ldots,\ell$) such that%
\begin{equation}
\int_{\Omega}\tilde{f}(x)\,\mathrm{d}x\leq\sum_{i=1}^{\ell}\int_{B_{i}}%
\tilde{f}(x)\,\mathrm{d}x+\varepsilon\text{.} \label{e8}%
\end{equation}
Let $U_{i}=\varphi(B_{i})$, then $\varphi:B_{i}\rightarrow U_{i}$ is a $C^{1}%
$-diffeomorphism. Since $\partial B_{i}$ can be singly parametrized and
$J_{\varphi}$ is of constant sign in $\Omega$, hence in $B_{i}$, by Corollary
\ref{cc} we have%
\begin{equation}
\int_{B_{i}}\tilde{f}(x)\,\mathrm{d}x=\int_{U_{i}}f(y)\,\mathrm{d}y\text{.}
\label{e9}%
\end{equation}
Because $U_{i}\cap U_{j}=\emptyset$ and%
\[
U=\bigcup_{i=1}^{\ell}U_{i}\subset D\text{,}%
\]
from \eqref{e8}, \eqref{e9}, and noting that $f\geq0$, we deduce that%
\[
\int_{\Omega}\tilde{f}(x)\,\mathrm{d}x\leq\sum_{i=1}^{\ell}\int_{U_{i}%
}f(y)\,\mathrm{d}y+\varepsilon=\int_{U}f(y)\,\mathrm{d}y+\varepsilon\leq
\int_{D}f(y)\,\mathrm{d}y+\varepsilon\text{.}%
\]
Let $\varepsilon\rightarrow0$, we get%
\begin{equation}
\int_{\Omega}\tilde{f}(x)\,\mathrm{d}x\leq\int_{D}f(y)\,\mathrm{d}y\text{.}
\label{e10}%
\end{equation}
Since $\varphi:\Omega\rightarrow D$ is a diffeomorphism, switching the roles
of $f$ and $\tilde{f}$ in the above argument, we obtain%
\begin{equation}
\int_{D}f(y)\,\mathrm{d}y\leq\int_{\Omega}\tilde{f}(x)\,\mathrm{d}x\text{.}
\label{e11}%
\end{equation}
Now the conclusion of Theorem \ref{t1} follows from \eqref{e10} and \eqref{e11}.

\subsection*{Acknowledgments}
This work was supported by NSFC (11671331) and NSFFJ (2014J06002). Y. Zhang is grateful to Professor Huai-Dong Cao for constant encouragement and support. He is supported by the Science and Technology Development Fund (Macao S.A.R.) Grant FDCT/ 016/2013/A1 and the Project MYRG2015-00235-FST of the University of Macau.

\end{document}